\newcommand{\C}{{\mathbb{C}}}          
\newcommand{\R}{{\mathbb{R}}}          
\newcommand{\Sphere}{\mathbb{S}}
\newcommand{\fraku}{{\mathfrak{u}}}
\newcommand{\frakv}{{\mathfrak{v}}}
\newcommand{\XIS}{{\mathfrak{X}}}
\newcommand{\rr}{\rightarrow}
\newcommand{\lrr}{\longrightarrow}
\newcommand{\na}{{\nabla}}
\newcommand{\dx}{{\mathrm{d}}}
\newcommand{\papa}[2]{\frac{\partial#1}{\partial#2}}
\newcommand{\cz}{{\overline{z}}}
\newcommand{\cf}{{\overline{f}}}
\newcommand{\ch}{{\overline{h}}}
\newcommand{\cvarepsilon}{{\overline{\varepsilon}}}
\newcommand{\vol}{{\mathrm{vol}}}
\newcommand{\estrela}{{\boldsymbol{\star}}}
\newtheorem{teo}{Theorem}
\newtheorem{coro}{Corollary}
\newtheorem{prop}{Proposition}
\newenvironment{Rema}[1][Remark.]{\begin{trivlist}
\item[\hskip \labelsep {\bfseries #1}]}{\end{trivlist}}
\def\cyclic{\mathop{\kern0.9ex{{+}
\kern-2.2ex\raise-.28ex\hbox{\Large\hbox{$\circlearrowright$}}}}\limits}
\title{The volume of a unit vector field in 2 dimensions via calibrations}
\author{Rui Albuquerque}
\begin{document}


\maketitle


\begin{abstract}

We use the theory of calibrations to write the equation of a minimal volume vector field on a given Riemann surface.

\end{abstract}


\ \\
{\bf Key Words:} vector field; minimal volume; calibration.
\vspace*{1mm}\\
{\bf MSC 2020:} Primary: 53C38, 57M50, 57R25; Secondary: 58A15.

\vspace*{6mm}

\setcounter{section}{1}

\markright{\sl\hfill  Rui Albuquerque \hfill}

\vspace*{2mm}
\begin{center}
\begin{large}\textbf{{{1 -- Introduction}}}
\end{large}
\end{center}
\vspace*{2mm}

Gluck and Ziller used the theory of calibrations to prove that the minimal volume unit vector fields defined on the 3-dimensional sphere are the Hopf vector fields, \cite{GluckZiller}. Inspired by such work, we try to parallel those ideas on the setting of an oriented Riemannian 2-manifold.


In \cite{GluckZiller} an appropriate calibration 3-form $\varphi$ is found on the total space of the unit tangent sphere bundle $\pi:S\Sphere^3\lrr \Sphere^3$. The bundle sections are of course the unit vector fields on the base. Applying the theory of calibrations of Harvey and Lawson (\cite{HarLaw}), the corresponding embedded 3-dimensional $\mathrm{C}^2$ submanifolds calibrated by $\varphi$ are precisely the Hopf vector fields. They minimize volume globally in a unique homology class, namely the canonical class of the base $\Sphere^3$ which is included in $H_3(S\Sphere^3)$.


The question of minimality in dimension 2 has been raised before, but very little seems to be known. There are several important results e.g. in \cite{BorGil2010,BritoChaconJohnson,BritoGomesGoncalves,GilMedranoLlinaresFuster,Wieg}. To the best of our knowledge, a simple differential equation characterizing the 2-dimensional variational problem was missing.

Let $M$ denote a Riemann surface endowed with a unit norm $\mathrm{C}^2$ vector field $X$. By the same original definition in \cite{GluckZiller}, we have, cf. \cite{GilMedranoLlinaresFuster},
\begin{equation}  \label{Definition_volume}
\vol(X) =\vol(M,X^*g^S)=\int_M\sqrt{1+\|\na_{e_0}X\|^2+\|\na_{e_1}X\|^2}\,\vol_M
\end{equation}
where $g^S$ is the Sasaki metric on $SM$ and $e_0,e_1$ is \textit{any} local orthonormal
frame on $M$.

We denote by $\pi:SM\longrightarrow M$ the unit tangent sphere bundle of $M$, eventually with boundary. $SM$ is a Riemannian submanifold of metric contact type with contact 1-form $e^0$, this is, a contact manifold with compatible metric induced from $(TM,g^S)$ and contact structure induced from the geodesic spray.

When $M$ is oriented, there exists a natural differential system of 1-forms $e^0,e^1,e^2$ globally defined on $SM$ (which we like to see as the
simplest case of a fundamental differential system introduced in \cite{Alb2019b}). Let us recall at once the three structural equations of Cartan: $\dx e^0=e^{21}$, $\dx e^1=e^{02}$, $\dx e^2=K e^{10}$, where $K$ stands for the Gauss curvature of $M$.

It is clear how to find the global frame $e_0,e_1,e_2$ at each point $u\in SM$ such that $\pi(u)=x\in M$. The global vector field $e_0$ is the tautologial horizontal vector field, ie. the horizontal lift of $u\in T_xM$. In other words, $e_0$ is the geodesic spray vector field. Then $e_0,e_1$ is a well-defined direct orthonormal basis of horizontal vector fields; and $e_2$ is the vertical dual of $e_1$ tangent to the $S^1$ fibres.

In this article we start by characterizing a 2-form $\varphi$ on $SM$, clearly a linear combination of $e^{01},e^{20},e^{12}$, which defines the appropriate calibration for the study of unit vector fields on $M$.

We then establish the existence of $\varphi$ to that of a minimal vector field on $M$. Since $M$ must not satisfy any further restriction, our first theorem is also a local result. Indeed, we deduce an equation of a minimal volume vector field in any bounded domain: letting $A$ be essentially a $\C$-valued function given by the components of $\na_\cdot X$, we must have, in a conformal chart $z$ of $M$,
\begin{equation} \label{minimalvectorfieldcalibrated_intro}
 \papa{}{\cz}\frac{A}{\sqrt{1+|A|^2}}=0.
\end{equation}

Our second main result is the solution of \eqref{minimalvectorfieldcalibrated_intro} deduced over constant negative sectional curvature $K<0$.

For the reader to grasp the questions developed here below, we note the existence of a parallel vector field, clearly an absolute minima of the volume, starts as a local question. On the other hand, the theory of calibrations due to Harvey and Lawson applies to ma\-ni\-folds with boundary. So there is a path through geometry and topology here to pursue.

\vspace*{2mm}
\begin{center}
\begin{large}\textbf{{{2 -- Minimal volume over a surface}}}
\end{large}
\end{center}
\vspace*{2mm}

We start by recalling some general ideas in any dimension.

Let $(M,\langle\ ,\ \rangle)$ be an oriented Riemannian manifold of dimension $n+1$. Recall the
well-known metric and contact structure $e^0$ on the total space of
$\pi:SM\lrr M$. As usual, we let $e_0$ denote the geodesic spray, i.e. the unit norm horizontal vector field such that ${\dx\pi}_{u}(e_0)=u\in T_{\pi(u)}M,\
\forall u\in SM$.

Let us assume a calibration $\varphi$ is defined on $SM$.

Let $X\in\XIS_M$ be a class $\mathrm{C}^2$ unit norm vector field on $M$. As explained in \cite{HarLaw}, since $\varphi\leq\vol$ and since the $H_{n+1}(SM,\R)$ homology class of $X(M)$ is the same for all $X$, the minimal volume unit vector fields are those for which $\varphi=\vol$
when restricted to the submanifold ${X(M)}$. Indeed, recalling $\vol_X$ from \cite{GilMedranoLlinaresFuster,GluckZiller}, such unit vector fields are those for which $X^*\varphi=\vol_X$; corresponding to the so-called $\varphi$-submanifolds which are sections of $\pi:SM\lrr M$. Then the fundamental relation follows: for any unit $X'\in\XIS_M$,
\begin{equation}
   \int_M\vol_X =\int_{X(M)}\varphi=\int_{X'(M)}\varphi \leq  \int_M\vol_{X'}.
\end{equation}

The theory of calibrations holds for submanifolds with boundary of the
calibrated manifold. So we may well focus on a fixed open subset, a domain
$\Omega\subset M$ perhaps with non-empty boundary, and seek for an immersion 
$X:\Omega\rr SM$ giving a $\varphi$-submanifold. We remark that prescribing boundary values for $X$ on a compact $\partial\Omega$ implies that certain \textit{moment} conditions are
satisfied, cf. \cite[Eq.\,6.9]{HarLaw}.

Recalling a useful notation $\pi^*,\pi^\estrela$, for the horizontal, respectively vertical, canonical lift, we have the `horizontal plus vertical' decomposition $\dx
X(Y)=\pi^*Y+\pi^\star(\na_YX)$ in $TTM$. Also we may find local adapted frames
$e_0,e_1,\ldots,e_n,e_{1+n},\ldots,e_{2n}$, indeed, a local oriented orthonormal moving
frame on $SM$ with the $e_{i+n}$ vertical \textit{mirror} of the horizontal $e_i$,
$i=1,\ldots,n$.

$\pi^*X$ is the horizontal lift and thus $\pi^*X=e_0$ when we restrict to the submanifold $X(M)\subset SM$. This implies the pullback $X^*e^0=X^\flat$. The
horizontal $e_i$ project through $\dx\pi$ to a frame $e_i\in TM$ (same notation).
Hence, we may write
\begin{equation}
 \dx X(e_i)=e_i+\sum_{j=1}^n A_{ij}e_{j+n}
\end{equation}
for $i=0,1\ldots,n$, where $A_{ij}=\langle\na_{e_i}X,e_j\rangle$. Since $\|X\|=1$, $A_{i0}=0$.

We now suppose $M$ is a Riemann surface and $\pi:SM\lrr M$ is the unit circle tangent bundle. Let us search for the calibration $\varphi$.

As it is well-known, $SM$ is parallelizable. We have the global direct orthonormal frame $e_0,e_1,e_2$, with $e_2$ the vertical mirror of $e_1$. In particular $\pi^*\vol_M=e^{01}$.

The following formulas are well-known, cf. \cite{Alb2019b} and the references therein:
\begin{equation} \label{structuralexteriorderivatives}
 \dx e^0=e^2\wedge e^1,\qquad\dx e^1=e^0\wedge e^2, \qquad \dx e^2=K\,e^1\wedge e^0
\end{equation}
where $K=\langle R(e_0,e_1)e_1,e_0\rangle$ is the Gauss curvature. Notice $K$ 
is the pullback of a function on $M$ and it is not necessarily a constant.

Let us assume the abbreviation $b=\pi^*b$ for any given real function $b$
on $M$; this gives a function on $SM$ of course constant along the fibres.

Given $b_0,b_1,b_2\in\mathrm{C}^1_{M}(\R)$, we have a 2-form on $SM$:
\begin{equation}  \label{calibrationindim2}
 \varphi=b_2\,e^0\wedge e^1 + b_1\,e^2\wedge e^0 + b_0\,e^1\wedge e^2 .
 \end{equation} 
This is a 2-calibration if it has comass 1 and $\dx\varphi=0$. Recall from \cite{HarLaw} that comass 1 is defined by
\begin{equation}
 \sup\{\|\varphi\|^*_u:\ u\in SM\}=1
\end{equation}
where
\begin{equation}
 \|\varphi\|^*_u =\sup\bigl\{\langle\varphi_u,\xi\rangle:\ \xi\ \mbox{is a unit simple 2-vector at}\ u\bigr\}.
\end{equation}

\begin{prop} \label{prop_calibrationconditionindim2}
The 2-form $\varphi$ on $SM$ has comass 1 if and only if
 \begin{equation}  \label{calibrationconditionindim2}
  \sup\bigl\{{b_0}^2+{b_1}^2+{b_2}^2:\ x\in M\bigr\}=1.
 \end{equation}
The form $\varphi$ is closed if and only if the function $b_1+\sqrt{-1}b_0$ is holomorphic.
\end{prop}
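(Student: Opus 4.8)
The plan is to establish the two equivalences separately, both by direct computation in the global frame $e_0,e_1,e_2$ on $SM$.

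\medskip

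\textbf{Comass.} For the first claim, I would compute $\|\varphi\|^*_u$ explicitly. A unit simple $2$-vector $\xi$ at $u$ can be written as $\xi = v\wedge w$ with $v,w$ orthonormal, so $\langle\varphi_u,\xi\rangle$ is a bilinear-in-$(v,w)$, antisymmetric pairing; equivalently $\langle\varphi_u,\cdot\rangle$ is a linear functional on $\Lambda^2 T_uSM$, and its restriction to the Grassmannian of oriented $2$-planes attains its maximum at the Hodge dual of $\varphi_u$. Concretely, writing $\varphi_u$ as the vector $(b_0,b_1,b_2)$ against the basis $e^{12},e^{20},e^{01}$ of $\Lambda^2$, the sup over unit simple $2$-vectors equals the Euclidean norm $\sqrt{b_0^2+b_1^2+b_2^2}$ (achieved by the decomposable $2$-vector dual to $\varphi_u$, which in three dimensions is automatically simple). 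Taking the supremum over $u\in SM$, and noting each $b_i$ is constant along fibres, gives exactly \eqref{calibrationconditionindim2}. The only subtlety is confirming that in dimension $3$ every $2$-vector is simple, so the constrained optimum is genuinely attained; this is standard.

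\medskip

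\textbf{Closedness.} For the second claim I would just differentiate. Using $\dx(b\,\omega) = \dx b\wedge\omega + b\,\dx\omega$ for each of the three terms of \eqref{calibrationindim2}, and substituting the structural equations \eqref{structuralexteriorderivatives}, I get
\begin{equation*}
 \dx\varphi = \dx b_2\wedge e^{01} + \dx b_1\wedge e^{20} + \dx b_0\wedge e^{12}
  + \bigl(b_2\,e^0\wedge e^{21} + b_1\,e^2\wedge (e^0\wedge e^2)\text{-type} + b_0\,e^1\wedge K e^{10}\bigr),
\end{equation*}
where the curvature terms must be expanded carefully: $b_2\,\dx e^{01}=b_2(\dx e^0\wedge e^1 - e^0\wedge\dx e^1)$, and similarly for the others. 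Since each $b_i$ is a pullback from $M$, $\dx b_i$ is a horizontal $1$-form, i.e. a combination of $e^0$ and $e^1$ only, so $\dx b_i\wedge e^{01}=0$ automatically and the $b_2$-term contributes nothing new; the surviving pieces are $\dx b_1\wedge e^{20}$, $\dx b_0\wedge e^{12}$, and whatever curvature terms do not cancel. Collecting everything as a multiple of the volume form $e^{012}$ of $SM$, the coefficient will be a linear expression in the directional derivatives $e_0(b_0),e_1(b_0),e_0(b_1),e_1(b_1)$ together with $b_2$ (and possibly $K$), and setting it to zero yields a first-order system on $M$ relating $b_0,b_1,b_2$.

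\medskip

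\textbf{Identification with holomorphicity.} The final step is to recognize the resulting system as the Cauchy–Riemann equations for $b_1+\sqrt{-1}\,b_0$. In a conformal chart the horizontal frame $e_0,e_1$ is proportional to $\partial/\partial x,\partial/\partial y$ (up to a common conformal factor, with the vertical angle as third coordinate on $SM$), so the two equations $e_0(b_1)=e_1(b_0)$ and $e_0(b_0)=-e_1(b_1)$ — which I expect to drop out after the cancellations — say precisely that $b_1+\sqrt{-1}\,b_0$ satisfies $\partial_{\bar z}(b_1+\sqrt{-1}b_0)=0$. Any residual term involving $b_2$ or $K$ should cancel identically because $e_2$ acts trivially on pullbacks, or else force a constraint that the statement absorbs; I would check this cancellation explicitly. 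The main obstacle is the bookkeeping in the closedness computation: keeping the orientation/sign conventions in \eqref{structuralexteriorderivatives} straight through the wedge manipulations, and making sure the curvature contributions from $\dx e^2$ and $\dx e^0$ really do cancel so that $K$ does not appear in the final condition — that is exactly why the clean answer ``$b_1+\sqrt{-1}b_0$ holomorphic'' (with no curvature dependence) is the non-obvious part worth verifying carefully.
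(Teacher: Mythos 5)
Your comass argument is sound and is essentially the paper's: the paper writes $\varphi(\fraku,\frakv)=\langle b_0e_0+b_1e_1+b_2e_2,\fraku\times\frakv\rangle$ and applies the Cauchy inequality, which is the same computation as your Hodge-dual/every-2-vector-is-simple-in-dimension-3 argument, together with the observation that the $b_i$ are fibre-constant so the supremum over $SM$ equals the supremum over $M$.

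The closedness part, however, has a genuine gap. The harmless half of your worry disappears at once: the structural equations give $\dx(e^{01})=\dx(e^{20})=\dx(e^{12})=0$ identically (each expansion contains a repeated $1$-form), so neither $b_2$ nor $K$ ever enters and $\dx\varphi=\bigl(\dx b_0(e_0)+\dx b_1(e_1)\bigr)e^{012}$. The real problem is your last step. This is \emph{one} scalar equation on $SM$, and your plan to read it off by identifying $e_0,e_1$ with fixed multiples of $\partial_x,\partial_y$ in a conformal chart is incorrect: at $u\in SM$ the field $e_0$ is the horizontal lift of $u$ itself, so the projections of $e_0,e_1$ to $M$ rotate with the fibre angle $\theta$; they are not the lift of a fixed coordinate frame. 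Treating them as fixed yields only the single condition $\partial_xb_0+\partial_yb_1=0$, i.e.\ half of the Cauchy--Riemann system, which is strictly weaker than holomorphicity of $b_1+\sqrt{-1}\,b_0$, and there is no mechanism in your computation by which a second equation could ``drop out'': the coefficient of $e^{012}$ is a single function. The missing idea — the paper's phrase ``as the frame varies along a single fibre'' — is that the $b_i$ are constant along each fibre while $e_0,e_1$ rotate, so demanding $\dx b_0(e_0)+\dx b_1(e_1)=0$ at \emph{every} point of the fibre over $x$ means, for a fixed orthonormal frame $\hat e_1,\hat e_2$ on $M$ and all $\theta$,
\begin{equation*}
\cos\theta\,\bigl(\dx b_0(\hat e_1)+\dx b_1(\hat e_2)\bigr)+\sin\theta\,\bigl(\dx b_0(\hat e_2)-\dx b_1(\hat e_1)\bigr)=0 ,
\end{equation*}
and separating the two Fourier modes produces both Cauchy--Riemann equations (conversely, both are needed for the coefficient to vanish on all of $SM$, which gives the ``if'' direction). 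With that fibre argument inserted, your computation does close the proof; without it, the stated equivalence is not established.
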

\begin{proof}
For the first part, it is easy to deduce $\varphi(\fraku,\frakv)=\langle 
b_0e_0+b_1e_1+b_2e_2,\fraku\times\frakv\rangle$, for any $\fraku,\frakv$ tangent
to $SM$. We then recall that $\|\fraku\times\frakv\|=\|\fraku\wedge\frakv\|$. The definition of comass 1 together with Cauchy inequality yields $|(b_0,b_1,b_2)|\leq1$ and the requirement that the above supremum is 1. For the second part of the theorem, we note that $\dx b_i(e_2)=0, \forall i=0,1,2$, by construction. And therefore $\dx\varphi=0$ is
equivalent to the condition $\dx b_1(e_1)+\dx b_0(e_0)=0$. As the frame varies
along a single fibre we find Cauchy-Riemann equations. Hence the result.
\end{proof}
\begin{Rema}
There seems to be no advantage, later on, in considering general functions on $SM$; even if the equation $\dx b_2(e_2)+\dx b_1(e_1)+\dx b_0(e_0)=0$ sounds quite charmful. It is interesting to observe, by the way, that any two
functions $f,g$ on $M$, such that $\sup\{f^2+|\na g|^2\}=1$, define a 
calibration 2-form by $f\,e^0\wedge e^1 + \dx g\wedge e^2$.
\end{Rema}

Let us now seek for a calibration $\varphi$ on $SM$, intended for a new study on $M$.

Again let $X\in\XIS_M$ have unit norm and be defined over (a domain contained in) $M$. We then have a unique vector field $Y$ on the same domain such that $X,Y$ is a direct orthonormal frame.

The differential of the map $X$ is given by the
identities $\dx X(e_0)=e_0+A_{01}e_2$, $\dx X(e_1)=e_1+A_{11}e_2$, with usual 
notation $A_{ij}=\langle\na_{e_i}X,e_j\rangle$. In other words, abbreviating 
$A_{i1}=A_i$,
\begin{equation}
  X^*e^0=e^0,\qquad X^*e^1=e^1,\qquad X^*e^2=A_{0}e^0+A_{1}e^1  .
\end{equation}
Recalling definition \eqref{Definition_volume}, we find
\begin{equation}\label{Definition_volumeindim2}
\begin{split}
\vol_X &=\|\dx X(e_0)\wedge \dx X(e_1)\|\,e^{01}  \\
  &=\|e_0\wedge e_1+A_1e_0\wedge e_2+A_0e_2\wedge e_1\|\,e^{01} \\ 
  &=\sqrt{1+{A_{1}}^2+{A_{0}}^2}\,e^{01} .
\end{split}
\end{equation}
On the other hand,
\begin{equation} \label{pullbackvarphi}
 X^*\varphi=(-b_0A_{0}-b_1A_{1}+b_2)e^{01}.
\end{equation}
\begin{teo}  \label{teo_calibrationvsholomorphic}
 Suppose there exists a unit vector field $X$ on $M$ such that the $\C$-valued 
function $A=A_{1}+\sqrt{-1}A_{0}$ satisfies the following equation, in a 
conformal chart $z$ of $M$:
  \begin{equation}\label{minimalvectorfieldcalibrated}
  2(1+|A|^2)\papa{A}{\cz}-A\papa{|A|^2}{\cz}=0,
  \end{equation}
 corresponding to $A/\sqrt{1+|A|^2}$ being holomorphic. Then there exists a
calibration $\varphi$ on the total space of $SM$ for which $X$ is a
$\varphi$-submanifold. In particular, $X$ is a unit vector field on $M$ of 
minimal volume.
\end{teo}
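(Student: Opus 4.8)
The plan is to construct the calibration $\varphi$ explicitly from the given vector field $X$ and verify the two conditions of Proposition~\ref{prop_calibrationconditionindim2}, namely comass $1$ and closedness, and then to check that $X^*\varphi = \vol_X$ pointwise, which is precisely the condition for $X(M)$ to be a $\varphi$-submanifold. Comparing \eqref{pullbackvarphi} with \eqref{Definition_volumeindim2}, one sees that $X^*\varphi = \vol_X$ forces
\begin{equation}
 b_2 - b_0 A_0 - b_1 A_1 = \sqrt{1 + A_0^2 + A_1^2}
\end{equation}
together with the comass bound ${b_0}^2+{b_1}^2+{b_2}^2 \le 1$; by the Cauchy--Schwarz argument already used in the proof of the proposition, equality in the comass and the pullback identity can hold simultaneously only if $(b_0,b_1,b_2)$ is proportional to $(-A_0,-A_1,1)$ (the coefficient vector of the simple $2$-vector representing $\dx X(e_0)\wedge\dx X(e_1)$). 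Normalising, I would therefore \emph{define}
\begin{equation}
 b_0 = \frac{-A_0}{\sqrt{1+A_0^2+A_1^2}},\qquad
 b_1 = \frac{-A_1}{\sqrt{1+A_0^2+A_1^2}},\qquad
 b_2 = \frac{1}{\sqrt{1+A_0^2+A_1^2}},
\end{equation}
so that ${b_0}^2+{b_1}^2+{b_2}^2 = 1$ identically and $X^*\varphi = \vol_X$ by construction.

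With this choice the comass condition \eqref{calibrationconditionindim2} holds (the supremum of a constant function equal to $1$ is $1$), so the only remaining thing to verify is that $\varphi$ is closed. By the second part of Proposition~\ref{prop_calibrationconditionindim2}, $\dx\varphi = 0$ is equivalent to $b_1 + \sqrt{-1}\,b_0$ being holomorphic in the conformal chart $z$. Now
\begin{equation}
 b_1 + \sqrt{-1}\,b_0 = \frac{-A_1 - \sqrt{-1}\,A_0}{\sqrt{1+A_0^2+A_1^2}} = \frac{-A}{\sqrt{1+|A|^2}},
\end{equation}
since $|A|^2 = A_1^2 + A_0^2$. Hence $\dx\varphi = 0$ holds exactly when $A/\sqrt{1+|A|^2}$ is holomorphic, i.e. when $\partial_{\bar z}\bigl(A/\sqrt{1+|A|^2}\bigr) = 0$; expanding this derivative by the quotient/chain rule gives precisely equation \eqref{minimalvectorfieldcalibrated}, so the hypothesis of the theorem is exactly the closedness of $\varphi$.

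Once $\varphi$ is shown to be a genuine calibration with $X^*\varphi = \vol_X$, the conclusion that $X$ has minimal volume is the standard Harvey--Lawson argument recalled in the text: for any other unit field $X'$ on the (bounded) domain, $X(M)$ and $X'(M)$ lie in the same relative homology class of $SM$ rel boundary, so $\int \vol_X = \int_{X(M)}\varphi = \int_{X'(M)}\varphi \le \int \vol_{X'}$, using $X'^*\varphi \le \vol_{X'}$ from the comass-$1$ bound. The one point that needs a little care — and which I expect to be the main (if modest) obstacle — is checking that the coefficient functions $b_0,b_1,b_2$ are genuinely pullbacks of functions on $M$, i.e.\ that they are constant along the fibres of $\pi:SM\to M$, as required for \eqref{calibrationindim2} to make sense and for the proof of Proposition~\ref{prop_calibrationconditionindim2} to apply. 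This follows because $A_0 = \langle\na_{e_0}X,e_1\rangle$ and $A_1 = \langle\na_{e_1}X,e_1\rangle$ are expressed using the frame $e_0,e_1$ which, along a fixed fibre $S_xM$, rotates; one must verify that the combination defining $A = A_1 + \sqrt{-1}A_0$, and hence $A/\sqrt{1+|A|^2}$, transforms as (the pullback of) a well-defined object on $M$ — concretely, that $A\,\cz$ or the appropriate $(0,1)$- or $(1,0)$-type quantity descends. Granting that the setup of the excerpt already arranges this (it is implicit in writing $\varphi$ with $\pi^*b_i$), the proof is a direct verification.
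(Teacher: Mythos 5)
Your proposal is correct and follows essentially the same route as the paper: take $\vec b=(b_0,b_1,b_2)=(-A_0,-A_1,1)/\sqrt{1+|A|^2}$ (forced by the Cauchy--Schwarz equality case so that $X^*\varphi=\vol_X$ with comass $1$), then apply Proposition~\ref{prop_calibrationconditionindim2} to reduce closedness to holomorphy of $A/\sqrt{1+|A|^2}$, i.e.\ to \eqref{minimalvectorfieldcalibrated}, and conclude by the Harvey--Lawson argument. Your final worry about fibre-invariance is not an actual issue, since $A_0=\langle\na_XX,Y\rangle$ and $A_1=\langle\na_YX,Y\rangle$ are defined directly on $M$ through the frame $X,Y$, so the $b_i$ are genuine functions on $M$ as the construction requires.
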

\begin{proof}
 By Proposition \ref{prop_calibrationconditionindim2}, we search for a map
$\vec{b}=(b_0,b_1,b_2)$ from $M$ into the Euclidean ball of radius 1 and
having a limit value in the $\Sphere^2$ boundary. Let us also denote
$\vec{A}=(-A_0,-A_1,1)$.
 
 Now, by \eqref{Definition_volumeindim2} and \eqref{pullbackvarphi}, condition $X^*\varphi\leq\vol_X$ is equivalent to
 \[  \langle\vec{b},\vec{A}\rangle   \leq|\vec{A}| .   \]
 Since we wish equality and since $|\vec{b}|\leq1\leq|\vec{A}|$, there is a unique solution:
 \[  \vec{b}=\frac{\vec{A}}{|\vec{A}|} . \]
The corresponding $\varphi$ is globally defined, with the same domain as $X$. Finally, one must have $\varphi$ closed. Hence the function $A/\sqrt{1+|A|^2}$ must be holomorphic; and a
straightforward computation leads to \eqref{minimalvectorfieldcalibrated}.
\end{proof}

\begin{Rema}
 Seeing $A$ as $\na X$, one certainly finds inspiration for \eqref{minimalvectorfieldcalibrated} from the mi\-ni\-mal surface $u=u(x,y)$ graph equation in $\R^3$, due to Lagrange, cf. \cite[Eq. 1]{MeeksPerez}:
 \[ \mathrm{div}\Biggl(\frac{\na u}{\sqrt{1+|\na u|^2}}\Biggr)=0 . \]
\end{Rema}

\begin{coro}\label{corohyperbolic}
  Suppose $X$ is a solution of \eqref{minimalvectorfieldcalibrated}  such that the function $|A|$ is constant. Then $A$ is constant and the Riemann surface has constant sectional curvature $K=-|A|^2\leq0$. In particular,
  \begin{equation}
    \vol(X)=\sqrt{1-K}\,\vol(M) .
  \end{equation}
\end{coro}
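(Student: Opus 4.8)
The plan is to feed the two hypotheses straight into \eqref{minimalvectorfieldcalibrated} and then read the curvature off the Cartan structural equations \eqref{structuralexteriorderivatives}. First, since $|A|$ is constant we have $\papa{|A|^2}{\cz}=0$ in every conformal chart, so \eqref{minimalvectorfieldcalibrated} collapses to $2(1+|A|^2)\papa{A}{\cz}=0$; as $1+|A|^2>0$ this forces $\papa{A}{\cz}=0$, i.e. $A$ is holomorphic. A holomorphic function of constant modulus is constant (differentiate $|A|^2=A\overline{A}$, or invoke the open mapping theorem), hence $A$, and therefore each real component $A_0,A_1$, is constant.

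Next I would extract $K$ using the identifications recorded just before Theorem \ref{teo_calibrationvsholomorphic}: on the domain of $X$ in $M$ one has $X^*e^0=e^0$, $X^*e^1=e^1$ for the coframe dual to $X,Y$, and $X^*e^2=A_0e^0+A_1e^1$, together with $X^*e^{01}=\vol_M$ and $X^*K=K$. Pulling back the first two identities of \eqref{structuralexteriorderivatives} gives $X^*(\dx e^0)=(A_0e^0+A_1e^1)\wedge e^1=A_0\,\vol_M$ and likewise $X^*(\dx e^1)=A_1\,\vol_M$; pulling back the third one, and using $\dx(X^*e^i)=X^*(\dx e^i)$, then yields
\[ -K\,\vol_M=X^*(\dx e^2)=\dx\bigl(A_0e^0+A_1e^1\bigr)=\dx A_0\wedge e^0+\dx A_1\wedge e^1+(A_0^2+A_1^2)\,\vol_M . \]
So in general $\dx A_0\wedge e^0+\dx A_1\wedge e^1=-(K+|A|^2)\,\vol_M$; under our hypothesis $A_0,A_1$ are constant, the left-hand side vanishes, and we are left with $K=-|A|^2$. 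Since $|A|$ is constant this is a constant, and of course $K=-|A|^2\le0$.

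Finally, the volume is immediate from \eqref{Definition_volumeindim2}: $\vol_X=\sqrt{1+A_0^2+A_1^2}\,e^{01}=\sqrt{1+|A|^2}\,\vol_M$, so integrating the constant $\sqrt{1+|A|^2}=\sqrt{1-K}$ over $M$ gives $\vol(X)=\sqrt{1-K}\,\vol(M)$. I do not expect a serious obstacle here: the only point requiring care is the middle step, where the pullbacks of \eqref{structuralexteriorderivatives} must be carried out with the correct identifications of the $X^*e^i$, so that the quadratic term $(A_0^2+A_1^2)\vol_M$ is produced and merges with the $K$ term; everything else is a one-line verification, and since the whole argument is local no topological input is needed.
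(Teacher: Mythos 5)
Your argument is correct, and the first step (constancy of $|A|$ collapses \eqref{minimalvectorfieldcalibrated} to $\papa{A}{\cz}=0$, and a holomorphic function of constant modulus is constant) is exactly the paper's. Where you genuinely diverge is the curvature step: the paper computes $R(e_0,e_1)e_1$ directly from the Levi--Civita connection on $M$, using $\na_{e_0}e_0=A_0e_1$, $\na_{e_1}e_0=A_1e_1$ and the bracket $[e_0,e_1]=-A_0e_0-A_1e_1$, arriving at $K=\dx A_0(e_1)-\dx A_1(e_0)-|A|^2$, whereas you pull back Cartan's structural equations \eqref{structuralexteriorderivatives} along the section $X$ and compare $\dx(X^*e^2)$ with $X^*(\dx e^2)$, obtaining the equivalent identity $\dx A_0\wedge e^0+\dx A_1\wedge e^1=-(K+|A|^2)\vol_M$ (evaluate on $(X,Y)$ to see it is literally the paper's formula). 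Your route stays entirely within the exterior-calculus framework of the global coframe on $SM$ that the paper sets up, avoids the connection/bracket computation, and produces as a by-product the general ``Gauss equation'' for an arbitrary unit section, not just for constant $A$; the paper's tensorial computation is more self-contained in that it does not rely on the identifications $X^*e^i$ being carried over to $M$ correctly, which is indeed the one point in your proof requiring care (and which you handle correctly, since $X^*e^0=X^\flat$, $X^*e^1=Y^\flat$ and $\dx$ commutes with $X^*$). You also spell out the final volume identity $\vol_X=\sqrt{1+|A|^2}\,e^{01}=\sqrt{1-K}\,\vol_M$ from \eqref{Definition_volumeindim2}, which the paper leaves implicit. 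No gaps; at most one could remark that on a disconnected domain ``$A$ constant'' should be read componentwise, with $K=-|A|^2$ unaffected.
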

\begin{proof}
 Let us use the notation $e_0=X$, $e_1=Y$ on $M$, as before. In general context, we have $\na_0e_0=A_{0}e_1$, $\na_1e_0=A_{1}e_1$ and so $\na_0e_1=-A_{0}e_0$, $\na_1e_1=-A_{1}e_0$. Hence $[e_0,e_1]=-A_{0}e_0-A_{1}e_1$ and then, cf. definition of $R$ in \cite{Alb2019b},
  \begin{align*}
  R(e_0,e_1)e_1 &= \na_{e_0}\na_{e_1}e_1-\na_{e_1}\na_{e_0}e_1-\na_{[e_0,e_1]}e_1  \\  
  & = -\na_0(A_{1}e_0)+\na_1(A_{0}e_0)+A_{0}\na_0e_1+A_{1}\na_1e_1 \\
   & =-\dx A_{1}(e_0)e_0-A_{1}A_{0}e_1+\dx A_{0}(e_1)e_0+ A_{0}A_{1}e_1-{A_{0}}^2e_0-{A_{1}}^2e_0 \\
    & =\dx A_{0}(e_1)e_0-\dx A_{1}(e_0)e_0-{A_{0}}^2e_0-{A_{1}}^2e_0.
  \end{align*}
 Now, if $|A|$ is constant, then from \eqref{minimalvectorfieldcalibrated} it follows that $A$ is holomorphic. Henceforth $A$ is constant. And thus $K=\langle R(e_0,e_1)e_1,e_0\rangle=-|A|^2$.
\end{proof}

Here follows a non-trivial complete example to which Corollary \ref{corohyperbolic} applies. It is the Lie group of affine transformations $M=\mathrm{Aff}(\R^2)$ with left invariant metric, together with any unit left invariant vector field $X$. It is easy to prove that $A$ is a constant.

$M$ is indeed a constant curvature hyperbolic surface, it is the 2-dimensional case of Special Example 1.7 from \cite{Milnor}, which is deduced there to be hyperbolic. Moreover, we know there are no other Lie groups of dimension 2 up to isometry with the same constant curvature $K<0$.

Equation \eqref{minimalvectorfieldcalibrated} proves quite hard to solve, be it for constant $K<0$ or $>0$. In the hyperbolic case, we cannot be sure about uniqueness of the solutions given by invariant theory.

\vspace*{2mm}
\begin{center}
\begin{large}\textbf{{3 -- In a conformal chart}}
\end{large}
\end{center}
\vspace*{2mm}

We seek further understanding of \eqref{minimalvectorfieldcalibrated} in general. Let us recall that a complex chart $z=x+iy$ corresponds with isothermal coordinates, ie. a chart such that the metric is given by $\lambda|\dx z|^2$ for some function $\lambda>0$.

A real vector field $X$ is given by $X=a\partial_x+b\partial_y=f\partial_z+\cf\partial_\cz$ where $f=a+ib$. If $Z=h\partial_z+\ch\partial_\cz$ is another vector field, then
\begin{equation}\label{metricconformal}
 \langle X,Z\rangle=(f\ch+\cf h)\frac{\lambda}{2}
\end{equation}
so that $\|X\|^2=f\cf\lambda$. We have $Y=if\partial_z-i\cf\partial_\cz 
=\overline{Y}$.

Recall the Levi-Civita connection, a real operator, is given by 
$\na_z\partial_z=\Gamma\partial_z$ where 
$\Gamma=\frac{1}{\lambda}\papa{\lambda}{z}$, 
$\na_z\partial_\cz=\na_\cz\partial_z=0$, 
$\na_\cz\partial_\cz=\frac{1}{\lambda}\papa{\lambda}{\cz}\partial_\cz$. In particular we
have $R(\partial_z,\partial_\cz)\partial_z=-\papa{\Gamma}{\cz}\partial_z$ and 
hence
\begin{equation}
K=\frac{\langle R(\partial_z,\partial_\cz)\partial_z,\partial_\cz\rangle}{\langle\partial_z,\partial_\cz\rangle^2}=-\frac{2}{\lambda}\papa{\Gamma}{\cz}=-\frac{2}{\lambda}\papa{^2\log\lambda}{z\partial\cz}.
\end{equation}
Therefore $\na_XX=\varepsilon_0\partial_z+\cvarepsilon_0\partial_\cz$ and $\na_YX=i\varepsilon_1\partial_z-i\cvarepsilon_1\partial_\cz$ where
\begin{equation}
 \varepsilon_0=ff'_z+\frac{f^2}{\lambda}\lambda'_z+\cf f'_\cz \qquad\mbox{and}\qquad \varepsilon_1=ff'_z+\frac{f^2}{\lambda}\lambda'_z-\cf f'_\cz .
\end{equation}
We have $A_1=\langle \na_YX,Y\rangle=(\varepsilon_1\cf+\cvarepsilon_1f)\frac{\lambda}{2}$ and $A_0=\langle \na_XX,Y\rangle=(-i\varepsilon_0\cf+i\cvarepsilon_0f)\frac{\lambda}{2}$. Now for a unit vector we have the identity $f'_z\cf\lambda+f\cf'_z\lambda+f\cf\lambda'_z=0$ and its conjugate. This yields $A_0=i\lambda(f^2\cf'_z-\cf^2f'_\cz)$ and $A_1=-\lambda(f^2\cf'_z+\cf^2f'_\cz)$, finally giving a simple and noteworthy result.
\begin{prop}
 $A=-2\lambda f^2\cf'_z =2(\Gamma f+f'_z)$.
\end{prop}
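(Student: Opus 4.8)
The plan is to verify the two claimed expressions for $A$ directly from the formulas already established in the excerpt, namely $A_0 = i\lambda(f^2\cf'_z - \cf^2 f'_\cz)$ and $A_1 = -\lambda(f^2\cf'_z + \cf^2 f'_\cz)$, together with the unit-norm identity $f'_z\cf\lambda + f\cf'_z\lambda + f\cf\lambda'_z = 0$. Since $A = A_1 + \sqrt{-1}\,A_0$, I first substitute and collect the $f^2\cf'_z$ and $\cf^2 f'_\cz$ terms: the combination is $A = -\lambda(f^2\cf'_z + \cf^2 f'_\cz) + \sqrt{-1}\cdot i\lambda(f^2\cf'_z - \cf^2 f'_\cz) = -\lambda(f^2\cf'_z + \cf^2 f'_\cz) - \lambda(f^2\cf'_z - \cf^2 f'_\cz)$, because $\sqrt{-1}\cdot i = -1$. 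The two $\cf^2 f'_\cz$ terms cancel and the two $f^2\cf'_z$ terms add, giving $A = -2\lambda f^2\cf'_z$. This establishes the first equality with no use of the unit constraint.

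For the second equality $A = 2(\Gamma f + f'_z)$, I would use the unit-norm identity. Dividing $f'_z\cf\lambda + f\cf'_z\lambda + f\cf\lambda'_z = 0$ by $f\cf\lambda$ (valid since $\|X\|^2 = f\cf\lambda = 1 \neq 0$, so in particular $f \neq 0$) yields $\dfrac{f'_z}{f} + \dfrac{\cf'_z}{\cf} + \dfrac{\lambda'_z}{\lambda} = 0$, that is, $\dfrac{\cf'_z}{\cf} = -\dfrac{f'_z}{f} - \Gamma$, where $\Gamma = \lambda'_z/\lambda$ as defined earlier. Multiplying by $\cf$ gives $\cf'_z = -\cf\left(\dfrac{f'_z}{f} + \Gamma\right)$. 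Now I substitute into $A = -2\lambda f^2\cf'_z = 2\lambda f^2\cf\left(\dfrac{f'_z}{f} + \Gamma\right) = 2\lambda f\cf(f'_z + \Gamma f)$. Since $\lambda f\cf = \|X\|^2 = 1$, this collapses to $A = 2(f'_z + \Gamma f)$, as claimed.

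The main obstacle is essentially bookkeeping: keeping track of the convention $\sqrt{-1} \cdot i = -1$ (the paper writes $A = A_1 + \sqrt{-1}A_0$ where the $i$ inside $A_0$'s derivation is the same imaginary unit, so the product is $-1$, not $+1$), and being careful that the unit-norm identity is only invoked for the second equality — the first equality $A = -2\lambda f^2\cf'_z$ holds for any vector field. I would also remark that the second identity can be read backwards: it says $A/2 = \Gamma f + f'_z = f'_z + f\,\partial_z \log\lambda$, which is exactly the $\partial_z$-covariant-derivative-type expression $\lambda^{-1}\partial_z(\lambda f)$ up to the factor, i.e. $A = 2\lambda^{-1}\partial_z(\lambda f)$ on the unit locus; this reformulation will be convenient when feeding $A$ into equation \eqref{minimalvectorfieldcalibrated} later. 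No genuine difficulty arises beyond verifying these algebraic cancellations carefully.
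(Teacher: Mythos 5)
Your verification is correct and is essentially the same computation the paper intends: combine $A=A_1+\sqrt{-1}\,A_0$ with the displayed formulas $A_0=i\lambda(f^2\cf'_z-\cf^2 f'_\cz)$, $A_1=-\lambda(f^2\cf'_z+\cf^2 f'_\cz)$, then use $f\cf\lambda=1$ and its $z$-derivative to pass to $2(\Gamma f+f'_z)$. One small caveat: those formulas for $A_0,A_1$ were themselves obtained using the unit-norm identity, so your aside that the first equality ``holds for any vector field'' is not accurate as stated — though this does not affect the proposition, which concerns unit fields only.
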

We note that $|A|=2|\cf'_z|$ and that a holomorphic unit vector field is just a parallel vector field.

Finding $f$ from equation \eqref{minimalvectorfieldcalibrated} in Theorem \ref{teo_calibrationvsholomorphic} proves quite difficult even for the trivial non-flat metrics.

On the round $\Sphere^2$ punctured at two antipodal points, it is stated and proved in \cite{BritoChaconJohnson} that a minimum of $\vol(X)$ is attained: a solution $X_0$ is given, for instance, by the directed meridians unit tangent vector field, invariant by parallel transport between poles. However, this solution does not solve \textit{our} equation --- which is not surprising!, for we have found vector fields with even less volume than $X_0$ in a smaller open region of $\Sphere^2$. Such result will be shown in a proper article.

\begin{small}

\ \\
\textsc{R. Albuquerque}\ \ \ \textbar\ \ \ 
{\texttt{rpa@uevora.pt}}\\
Centro de Investiga\c c\~ao em Mate\-m\'a\-ti\-ca e Aplica\c c\~oes\\
Rua Rom\~ao Ramalho, 59, 671-7000 \'Evora, Portugal\\
The research leading to these results has received funding from Funda\c c\~ao para a Ci\^encia e a Tecnologia. Project Ref. UIDB/04674/2020.

\end{small}


\begin{thebibliography}{30}








\bibitem{Alb2019b}
R. Albuquerque, \emph{A fundamental differential system of Riemannian geometry},
Rev. Mat. Iberoam. 35 (7) (2019), 2221--2250.





\bibitem{BorGil2010}
V.~Borrelli, O.~Gil-Medrano,
\emph{Area-minimizing vector fields on round 2-spheres}, J. Reine Angew. Math. 640 (2010), 85--99.



\bibitem{BritoChaconJohnson}
F.~Brito, P.~Chac\'on and D.~Johnson,
\emph{Unit vector fields on antipodally punctured spheres: big index, big volume}, Bull. Soc. Math. France 136.1 (2008), 147--157.








\bibitem{BritoGomesGoncalves}
F.~Brito, A.~Gomes and I.~Gon\c calves, 
\emph{Poincar\'e index and the volume functional of unit vector fields on punctured spheres}, 
manuscripta math. 161 (2020), 487--499. 





\bibitem{GilMedranoLlinaresFuster}
O.~Gil-Medrano and E.~Llinares-Fuster, \emph{Minimal unit vector fields}, Tohoku Math. J. 54 (2002), 71--84.




\bibitem{GluckZiller}
H.~Gluck and W.~Ziller,
\emph{On the volume of a unit vector field on the three-sphere},
 Comment. Math. Helv. 61 (1986), 177--192.

 


\bibitem{HarLaw}
R.~Harvey and H.B.~Lawson,
\emph{Calibrated geometries},
Acta Math. {148} (1982), 47--157.



\bibitem{Milnor}
J.W.~Milnor,
\emph{Curvatures of left invariant metrics on Lie groups},
Adv. Math. 21 (1976), 293--329.



\bibitem{MeeksPerez}
W.H.~Meeks and J.~P\'erez,
\emph{The classical theorey of minimal surfaces},
Bull. Amer. Math. Soc., Vol. 48, Number 3 (2011), 325--407.


\bibitem{Wieg}
G. Wiegmink,
\emph{Total bending of vector fields on Riemannian manifolds},
Math. Ann. 303, No. 2 (1995), 325--344.





\end{thebibliography}
\end{document}